 \newtheorem{thm}{Theorem}[section]
 \newtheorem{cor}[thm]{Corollary}
 \newtheorem{lem}[thm]{Lemma}
 \newtheorem{prop}[thm]{Proposition}
 \theoremstyle{definition}
 \theoremstyle{remark}
 \newtheorem{ex}[thm]{Example}
 \newtheorem*{question}{Question}
 \numberwithin{equation}{section}
\begin{document}

\title[A characterization for cs Toeplitz operators]
 {A characterization for complex symmetric Toeplitz operators}

\author[Marcos S. Ferreira]{Marcos S. Ferreira}

\address{%
Departamento de Ciências Exatas e Tecnológicas\\
Universidade Estadual de Santa Cruz\\
Ilhéus, Bahia, Brasil}

\email{msferreira@uesc.br}

\subjclass{Primary 47B35, 47A05; Secondary 47B32}

\keywords{Hardy space, Toeplitz operator, complex symmetric operator}

\date{July 13, 2022}

\begin{abstract}
In this paper we use orthonormal basis for the Hardy space $H^{2}(\mathbb{T})$, formed by rational functions, to characterize complex symmetric Toeplitz operators on $H^{2}(\mathbb{T})$. As a result, we get examples of these operators whose symbols are non-trigonometric functions.
\end{abstract}

\maketitle

\section{Introduction and background}

A \emph{conjugation} $C$ on a separable complex Hilbert space $\mathcal H$ is an antilinear operator $C:\mathcal H\rightarrow\mathcal H$ such that:
\begin{enumerate}
  \item [(a)] $C$ is \emph{isometric}: $\left\langle Cf,Cg\right\rangle=\left\langle g,f\right\rangle$, $\forall f,g\in\mathcal H$.
  \item [(b)] $C$ is \emph{involutive}: $C^{2}=I$.
\end{enumerate}

A bounded linear operator $T$ on $\mathcal{H}$ is said to be \emph{complex symmetric} if there exists a conjugation $C$ on $\mathcal{H}$ such that $CT=T^{*}C$, where $T^{*}$ is the adjoint of $T$. We will often say that $T$ is $C$-symmetric. 

The concept of complex symmetric operators on separable Hilbert spaces is a natural generalization of complex symmetric matrices and their general study was initiated by Garcia, Putinar and Wogen \cite{Garcia,Garcia2,Garcia3,Garcia4}. The class of complex symmetric operators includes other basic classes of operators such as normal, Hankel, compressed Toeplitz and some Volterra operators.

An equivalent definition for the complex symmetric operator concept that will be useful in this work is:

\begin{prop}(\cite[Proposition 2]{Garcia})\label{prop1}
A bounded linear operator $T$ on $\mathcal{H}$ is $C$-symmetric if, and only if, there exists an orthonormal basis $\left\{e_{n}\right\}$ of $\mathcal{H}$ with respect to which $T$ has a symmetric matrix representation. 
\end{prop}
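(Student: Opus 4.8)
The plan is to prove both directions by working with an orthonormal basis that is compatible with the conjugation. The decisive preliminary fact is that every conjugation $C$ on a separable Hilbert space admits an orthonormal basis $\{e_n\}$ of \emph{fixed points}, that is, with $Ce_n = e_n$ for every $n$; I would establish this first and treat it as the heart of the argument. The construction starts from the observation that, for any vector $f$, both $f + Cf$ and $i(f - Cf)$ are fixed by $C$ (this uses antilinearity together with $C^{2} = I$), so the fixed-point set spans a dense real subspace. Moreover, if $Ce = e$ and $Cf = f$, then property (a) gives $\langle e, f\rangle = \langle Cf, Ce\rangle = \langle f, e\rangle$, so inner products of fixed vectors are real; this is exactly what guarantees that a Gram--Schmidt procedure applied to a suitable spanning set of fixed vectors never leaves the fixed-point set, producing the required orthonormal basis.

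For the forward implication, assume $T$ is $C$-symmetric and fix a basis $\{e_n\}$ with $Ce_n = e_n$. Writing $T_{mn} = \langle Te_n, e_m\rangle$, I would combine the isometry of $C$ in the form $\langle u, v\rangle = \langle Cv, Cu\rangle$ with the relations $Ce_m = e_m$ and $CT = T^{*}C$ to obtain
\[
\langle Te_n, e_m\rangle = \langle Ce_m, CTe_n\rangle = \langle e_m, CTe_n\rangle = \langle e_m, T^{*}e_n\rangle = \langle Te_m, e_n\rangle,
\]
so that $T_{mn} = T_{nm}$ and the matrix of $T$ in this basis is symmetric.

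For the converse, suppose $T$ has a symmetric matrix $\langle Te_n, e_m\rangle = \langle Te_m, e_n\rangle$ in some orthonormal basis $\{e_n\}$, and define the canonical conjugation $C\bigl(\sum_n c_n e_n\bigr) = \sum_n \overline{c_n}\, e_n$. One checks directly that $C$ is antilinear, involutive, isometric, and fixes each $e_n$. Since $CT$ and $T^{*}C$ are both antilinear, it suffices to verify $CT = T^{*}C$ on the basis, and the computation
\[
CTe_n = \sum_m \overline{\langle Te_n, e_m\rangle}\, e_m, \qquad T^{*}e_n = \sum_m \overline{\langle Te_m, e_n\rangle}\, e_m
\]
shows that the two sides agree precisely because of the symmetry hypothesis; hence $T$ is $C$-symmetric.

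I expect the only genuine obstacle to be the construction of the fixed-point orthonormal basis; once it is in hand, both implications reduce to routine manipulations of the defining properties of $C$. The subtlety there is ensuring that orthonormalization stays within the real-linear span of fixed vectors, which is precisely why the reality of $\langle e, f\rangle$ for fixed $e, f$ must be verified before invoking Gram--Schmidt.
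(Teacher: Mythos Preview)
The paper does not supply its own proof of this proposition; it is quoted verbatim from \cite[Proposition~2]{Garcia} and immediately followed by a remark pointing to \cite[Lemma~1]{Garcia} for the fixed-point basis. There is therefore nothing in the present paper to compare your argument against.

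That said, your proposal is correct and is in fact the standard Garcia--Putinar proof: the existence of an orthonormal basis $\{e_n\}$ with $Ce_n=e_n$ (obtained by noting that $f+Cf$ and $i(f-Cf)$ are fixed, that inner products of fixed vectors are real, and then running Gram--Schmidt inside the real subspace of fixed vectors) is exactly the content of \cite[Lemma~1]{Garcia} that the paper cites after stating the proposition. Both implications then follow from the short matrix computations you wrote down. One cosmetic point: in your chain for the forward direction you should make explicit the step $CTe_n = T^{*}Ce_n = T^{*}e_n$ before pairing with $e_m$; as written the passage from $\langle e_m, CTe_n\rangle$ to $\langle e_m, T^{*}e_n\rangle$ silently uses both $CT=T^{*}C$ and $Ce_n=e_n$.
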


In Proposition \ref{prop1}, the conjugation $C$ that makes $T$ a complex symmetric operator is given by $Ce_{n}=e_{n}$ for all $n$ (see \cite[Lemma 1]{Garcia} for more details).

Throughout the paper, we assume that $\mathbb{T}$ is the boundary of the open unit disk $\mathbb{D}$ in the complex plane $\mathbb{C}$. Let $L^{2}(\mathbb{T})$ be the space of
square integrable functions on $\mathbb{T}$ with the inner product defined by
$$
\langle u,v \rangle=\int_{\mathbb{T}}u\overline{v}ds.
$$

Let $H^{2}(\mathbb{T})$ denote the classical \emph{Hardy space} associated to $\mathbb{D}$ which is the space of holomorphic functions on $\mathbb{D}$ with $L^{2}(\mathbb{T})$-boundary values in $\mathbb{T}$. Since the set of monomials $\left\{\frac{1}{\sqrt{2\pi}}z^{n}:n=0,1,2,\ldots\right\}$ is an orthonormal basis for $H^{2}(\mathbb{T})$, we have that $f\in H^{2}(\mathbb{T})$ if and only if
$$
f(z)=\sum_{n=0}^{\infty}a_{n}\frac{1}{\sqrt{2\pi}}z^{n} \ \text{where} \ \sum_{n=0}^{\infty}|a_{n}|^{2}<\infty.
$$

Let $L^{\infty}(\mathbb{T})$ be the space of essentially bounded measurable functions on $\mathbb{T}$. For each $\varphi$ be in $L^{\infty}(\mathbb{T})$, the \emph{Toeplitz operator} $T_{\varphi}:H^{2}(\mathbb{T})\rightarrow H^{2}(\mathbb{T})$, with symbol $\varphi$, is defined by
$$
T_{\varphi}f=P(\varphi f),
$$
for all $f\in H^{2}(\mathbb{T})$, where $P:L^{2}(\mathbb{T})\rightarrow H^{2}(\mathbb{T})$ is the orthogonal projection. It is clear that $T_{\varphi}$ is a bounded linear operator and that $T^{*}_{\varphi}=T_{\overline{\varphi}}$.

The study of complex symmetric Toeplitz operators is relatively recent and provides deep and important connections in several problems in quantum mechanics \cite{Bender,Garcia5}. One of the first examples of complex symmetric Toeplitz operator is due to Guo and Zhu \cite{Guo}. In this work, Guo and Zhu raised the question of characterizing complex symmetric Toeplitz operators on the Hardy space $H^{2}(\mathbb{T})$.

An interesting characterization of complex symmetric Toeplitz operators is due to Ko and Lee. In \cite{Ko} the authors considered the family of conjugations $C_{\lambda}$ on $H^{2}(\mathbb{T})$ given by
\begin{eqnarray}\label{eq2}
C_{\lambda}f(z)=\overline{f(\lambda\overline{z})}
\end{eqnarray}
with $\lambda\in\mathbb{T}$ and proved the following:

\begin{thm}
If $\varphi(z)=\sum_{n=-\infty}^{\infty}\widehat{\varphi}(n)z^{n}\in L^{\infty}(\mathbb{T})$, then $T_{\varphi}$ is $C_{\lambda}$-symmetric if, and only if, $\widehat{\varphi}(-n)=\lambda^{n}\widehat{\varphi}(n)$ for all $n\in\mathbb{Z}$, with $\lambda\in\mathbb{T}$.
\end{thm}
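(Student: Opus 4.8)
The plan is to reduce the operator identity $C_{\lambda}T_{\varphi}=T_{\varphi}^{*}C_{\lambda}$ to a relation among the Fourier coefficients of $\varphi$ by testing it against the orthonormal basis $e_{n}(z)=\frac{1}{\sqrt{2\pi}}z^{n}$. First I would record how $C_{\lambda}$ acts on this basis: writing $C_{\lambda}z^{n}=\overline{(\lambda\overline{z})^{n}}=\overline{\lambda}^{\,n}z^{n}$ and using that $1/\sqrt{2\pi}$ is real gives $C_{\lambda}e_{n}=\overline{\lambda}^{\,n}e_{n}$. Thus $C_{\lambda}$ is the antilinear diagonal map with eigenphases $\overline{\lambda}^{\,n}$, and its involutivity is an immediate consequence of $|\lambda|=1$.

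Next I would compute the two matrix representations involved. Since $P$ is the self-adjoint projection fixing $H^{2}(\mathbb{T})$ and $e_{m}\in H^{2}(\mathbb{T})$, one has $\langle T_{\varphi}e_{n},e_{m}\rangle=\langle\varphi e_{n},e_{m}\rangle=\widehat{\varphi}(m-n)$, the usual Toeplitz pattern. Using $T_{\varphi}^{*}=T_{\overline{\varphi}}$ together with the identity $\widehat{\overline{\varphi}}(k)=\overline{\widehat{\varphi}(-k)}$, the adjoint has entries $\langle T_{\overline{\varphi}}e_{n},e_{m}\rangle=\overline{\widehat{\varphi}(n-m)}$.

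Then I would apply both sides of $C_{\lambda}T_{\varphi}=T_{\overline{\varphi}}C_{\lambda}$ to $e_{n}$ and expand in the basis, taking care that $C_{\lambda}$ is antilinear and therefore conjugates every coefficient it passes through. The left side becomes $\sum_{m}\overline{\widehat{\varphi}(m-n)}\,\overline{\lambda}^{\,m}e_{m}$, while the right side becomes $\overline{\lambda}^{\,n}\sum_{m}\overline{\widehat{\varphi}(n-m)}\,e_{m}$. Equating the coefficient of $e_{m}$ and setting $k=m-n$ yields $\overline{\widehat{\varphi}(k)}\,\overline{\lambda}^{\,k}=\overline{\widehat{\varphi}(-k)}$, and conjugating gives $\widehat{\varphi}(-k)=\lambda^{k}\widehat{\varphi}(k)$. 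Because matching basis coefficients is an equivalence, this proves both implications at once.

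The computations are routine; the only point demanding care is the bookkeeping of the conjugations forced by the antilinearity of $C_{\lambda}$, both when it acts on $T_{\varphi}e_{n}$ and when one passes between $\widehat{\varphi}$ and $\widehat{\overline{\varphi}}$. An alternative route that avoids the antilinear computation is to rescale to the basis $f_{n}=\lambda^{-n/2}e_{n}$, for which $C_{\lambda}f_{n}=f_{n}$; then Proposition \ref{prop1} asserts that $T_{\varphi}$ is $C_{\lambda}$-symmetric exactly when its matrix $\langle T_{\varphi}f_{n},f_{m}\rangle=\lambda^{(m-n)/2}\widehat{\varphi}(m-n)$ is symmetric, and imposing symmetry recovers the same relation $\widehat{\varphi}(-k)=\lambda^{k}\widehat{\varphi}(k)$. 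I would present the direct coefficient computation as the main argument, since it sidesteps the choice of a square root $\lambda^{1/2}$.
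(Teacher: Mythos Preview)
Your argument is correct, but note that the paper itself does not supply a proof of this theorem: it is quoted as a result of Ko and Lee \cite{Ko} and serves only as background for the paper's own Theorem~\ref{teo1}. There is therefore nothing in the paper to compare your proof against.

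That said, your direct computation is clean and complete. The identification $C_{\lambda}e_{n}=\overline{\lambda}^{\,n}e_{n}$ is right, the Toeplitz matrix entries $\langle T_{\varphi}e_{n},e_{m}\rangle=\widehat{\varphi}(m-n)$ match the paper's Fourier-coefficient convention, and the antilinear bookkeeping in the step
\[
C_{\lambda}T_{\varphi}e_{n}=\sum_{m\ge 0}\overline{\widehat{\varphi}(m-n)}\,\overline{\lambda}^{\,m}e_{m}
\quad\text{versus}\quad
T_{\overline{\varphi}}C_{\lambda}e_{n}=\overline{\lambda}^{\,n}\sum_{m\ge 0}\overline{\widehat{\varphi}(n-m)}\,e_{m}
\]
is handled correctly. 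Matching coefficients is indeed an equivalence, so both directions follow at once. Your alternative via the rescaled basis $f_{n}=\lambda^{-n/2}e_{n}$ and Proposition~\ref{prop1} also works and is in the spirit of the paper's approach to its main theorem, where $\mathfrak{J}$-symmetry is analysed precisely by checking symmetry of the matrix of $T_{\varphi}$ in a basis fixed by the conjugation; the only cost, as you note, is the harmless choice of a branch of $\lambda^{1/2}$.
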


Recently, several authors have dedicated themselves to the study of complex symmetric Toeplitz operators on the Hardy space. Specifically regarding the characterization of these operators, we highlight \cite{Li, Bu, Arup, Chen}.

The plan of this paper is to obtain a characterization for complex symmetric Toeplitz operators (Theorem \ref{teo1}) using Proposition \ref{prop1}, that is, finding an orthonormal basis for $H^{2}(\mathbb{T})$ so that the Toeplitz operator $T_{\varphi}$ has a symmetric matrix representation. As a consequence we provide an example of complex symmetric Toeplitz operator $T_{\varphi}$ with non-trigonometric symbol.

\section{Complex symmetry of Toeplitz operators}

\subsection{$\mathfrak{J}$-symmetric Toeplitz operators}

Orthonormal basis play an important role in the study of complex symmetric operators (Proposition \ref{prop1}). In addition to this alternative definition, we use orthonormal basis to find conjugations on Hilbert spaces. In fact, if $\left\{f_{n}\right\}_{n\geq0}$ is an orthonormal basis for $H^{2}(\mathbb{T})$, then the antilinear operator
\begin{eqnarray}\label{eq5}
\sum_{n=0}^{\infty}a_{n}f_{n}\longmapsto\sum_{n=0}^{\infty}\overline{a_{n}}f_{n}
\end{eqnarray}
is naturally a conjugation on $H^{2}(\mathbb{T})$.

Let $p\in\mathbb{D}$. For each non-negative integer $n$, considering the rational function $R_{n}$ given by
$$
R_{n}=\sqrt{\frac{1-|p|^{2}}{2\pi}}\frac{(z-p)^{n}}{(1-\overline{p}z)^{n+1}},
$$
we have that $\mathfrak{B}=\left\{R_{n}:n=0,1,2,\ldots\right\}$ is an orthonormal basis for $H^{2}(\mathbb{T})$ (see \cite{Chung} for more details). Thus, we get from \eqref{eq5} the following:

\begin{lem}
Let $\mathfrak{J}:H^{2}(\mathbb{T})\rightarrow H^{2}(\mathbb{T})$ be defined by
\begin{eqnarray}\label{eq3}
\mathfrak{J}\left(\sum_{n=0}^{\infty}a_{n}\frac{(z-p)^{n}}{(1-\overline{p}z)^{n+1}}\right)=\sum_{n=0}^{\infty}\overline{a_{n}}\frac{(z-p)^{n}}{(1-\overline{p}z)^{n+1}}.
\end{eqnarray}
Then $\mathfrak{J}$ is a conjugation on $H^{2}(\mathbb{T})$.
\end{lem}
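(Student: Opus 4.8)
The plan is to recognize that $\mathfrak{J}$ is precisely the conjugation furnished by the general recipe \eqref{eq5} applied to the orthonormal basis $\mathfrak{B}=\{R_{n}\}$, the only thing demanding attention being that the normalizing constant does not interfere. Write $c:=\sqrt{(1-|p|^{2})/(2\pi)}$, a strictly positive real number, and $g_{n}:=(z-p)^{n}/(1-\overline{p}z)^{n+1}$, so that $R_{n}=c\,g_{n}$. First I would settle well-definedness: since $\mathfrak{B}$ is an orthonormal basis of $H^{2}(\mathbb{T})$ (see \cite{Chung}), every $f\in H^{2}(\mathbb{T})$ has a unique expansion $f=\sum_{n}b_{n}R_{n}$, hence a unique expansion $f=\sum_{n}(c\,b_{n})g_{n}$; thus the coefficients $a_{n}=c\,b_{n}$ occurring in \eqref{eq3} are uniquely determined by $f$, and $\mathfrak{J}$ is a well-defined (and clearly antilinear) map on $H^{2}(\mathbb{T})$.

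Next I would exploit that $c$ is real to identify $\mathfrak{J}$ with \eqref{eq5}. Indeed, $\mathfrak{J}f=\sum_{n}\overline{a_{n}}\,g_{n}=\sum_{n}c\,\overline{b_{n}}\,g_{n}=\sum_{n}\overline{b_{n}}R_{n}$, which is exactly the coordinatewise conjugation of $f$ in the basis $\mathfrak{B}$. Since the map in \eqref{eq5} is a conjugation, this already finishes the argument. If one prefers a self-contained check, I would instead verify properties (a) and (b) directly from $\mathfrak{J}(\sum_{n}b_{n}R_{n})=\sum_{n}\overline{b_{n}}R_{n}$: for $f=\sum_{n}b_{n}R_{n}$ and $g=\sum_{n}e_{n}R_{n}$, orthonormality of $\mathfrak{B}$ gives $\langle\mathfrak{J}f,\mathfrak{J}g\rangle=\sum_{n}\overline{b_{n}}e_{n}=\langle g,f\rangle$, which is the isometry (a), and applying $\mathfrak{J}$ twice returns $\sum_{n}\overline{\overline{b_{n}}}R_{n}=\sum_{n}b_{n}R_{n}$, which is the involution (b).

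The main (and essentially the only) obstacle is the bookkeeping of the normalizing constant: the reason coefficientwise conjugation may be performed indifferently relative to the $R_{n}$ or to the unnormalized $g_{n}$ is exactly that $c$ is a positive real scalar, so that $\overline{a_{n}}=c\,\overline{b_{n}}$ and the factor $c$ reappears unchanged. Were the normalization a non-real constant, \eqref{eq3} and \eqref{eq5} would define genuinely different antilinear maps, and the identification above would fail. Everything else reduces to the standard fact, already recorded before the lemma, that coordinatewise conjugation in an orthonormal basis is a conjugation.
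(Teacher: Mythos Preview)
Your argument is correct and follows exactly the paper's approach: the lemma is presented there as an immediate consequence of the general recipe \eqref{eq5} applied to the orthonormal basis $\mathfrak{B}=\{R_{n}\}$, with no further proof given. Your additional care in checking that the positive real normalizing constant $c$ does not interfere with coefficientwise conjugation is a welcome clarification of a point the paper leaves implicit.
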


Below we present our main result:

\begin{thm}\label{teo1}
Let $\varphi(z)=\sum_{n=-\infty}^{\infty}\widehat{\varphi}(n)z^{n}\in L^{\infty}(\mathbb{T})$ and $p\in\mathbb{D}$. The following statements are equivalent:
\begin{enumerate}
    \item [(i)] $T_{\varphi}$ is $\mathfrak{J}$-symmetric.
    \item [(ii)] For all non-negative integer $k$, holds
    $$
\widehat{\varphi}(k)=\widehat{\varphi}(-k)+\overline{p}\left\{\widehat{\varphi}(-k-1)-\widehat{\varphi}(k-1)\right\}.
$$
    \item [(iii)] $\varphi(z)=\varphi(\overline{z})\dfrac{1+\overline{p}\overline{z}}{1+\overline{p}z}$.
\end{enumerate}
\end{thm}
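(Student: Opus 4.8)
The plan is to prove the two equivalences (i) $\Leftrightarrow$ (ii) and (ii) $\Leftrightarrow$ (iii) separately, using Proposition \ref{prop1} to turn the operator statement into a statement about matrix entries in the basis $\mathfrak{B}$. Since $\mathfrak{J}$ is precisely the conjugation attached to the orthonormal basis $\{R_n\}$ through \eqref{eq5}, so that $\mathfrak{J}R_n=R_n$, Proposition \ref{prop1} shows that $T_\varphi$ is $\mathfrak{J}$-symmetric if and only if its matrix in $\mathfrak{B}$ is symmetric, i.e. $\langle T_\varphi R_n,R_m\rangle=\langle T_\varphi R_m,R_n\rangle$ for all $n,m\ge 0$. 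As $R_m\in H^2(\mathbb{T})$ and $P$ is the orthogonal projection onto $H^2(\mathbb{T})$, we have $\langle T_\varphi R_n,R_m\rangle=\langle\varphi R_n,R_m\rangle=\int_{\mathbb{T}}\varphi R_n\overline{R_m}\,ds$.

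The first concrete step is to compute $\overline{R_m}$ on $\mathbb{T}$. Using $\overline{z}=1/z$ there, a short manipulation gives $\overline{R_m}=\sqrt{\frac{1-|p|^2}{2\pi}}\,\frac{z(1-\overline{p}z)^m}{(z-p)^{m+1}}$, whence $R_n\overline{R_m}=\frac{1-|p|^2}{2\pi}\,z\,(z-p)^{n-m-1}(1-\overline{p}z)^{m-n-1}$ depends on $n,m$ only through $k=n-m$. Thus the matrix is constant along diagonals, and its symmetry is equivalent to a single family of scalar identities indexed by $k\in\mathbb{Z}$, namely the equality of the integrals obtained from $k$ and from $-k$.

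Next I would substitute $\varphi(z)=\sum_n\widehat{\varphi}(n)z^n$ into these integrals and evaluate them by $\int_{\mathbb{T}}z^j\,ds=2\pi\delta_{j,0}$ (equivalently, by residues at $z=0$ and $z=p$, the only singularities of the rational weights inside $\mathbb{D}$). Expanding $(z-p)^{k-1}$ and $(1-\overline{p}z)^{-k-1}$, together with their $k\mapsto-k$ counterparts, as Laurent series on $\mathbb{T}$ and extracting the zeroth coefficient reduces each diagonal identity to a linear relation among finitely many $\widehat{\varphi}(j)$; re-summing this family should collapse to the relation in (ii). I expect this bookkeeping — tracking the binomial and negative-binomial coefficients of the two rational factors and reorganizing the resulting double series into the single relation $\widehat{\varphi}(k)=\widehat{\varphi}(-k)+\overline{p}\{\widehat{\varphi}(-k-1)-\widehat{\varphi}(k-1)\}$ — to be the main obstacle, along with the justification, via completeness of $\{R_n\}$ (equivalently, of the powers of the Blaschke factor $(z-p)/(1-\overline{p}z)$), that equality of the integrals for all $k$ is genuinely equivalent to the coefficientwise relation and not merely implied by it. Alternatively, one may perform the change of variables $\zeta=(z-p)/(1-\overline{p}z)$ to turn the diagonal integrals into ordinary Fourier coefficients of a single transformed function; the symmetry $k\mapsto-k$ then becomes a reflection symmetry of those coefficients, reducing (i) to a functional equation that one identifies with (iii).

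Finally, the equivalence (ii) $\Leftrightarrow$ (iii) is a routine Fourier computation. Rewriting (iii) as $\varphi(z)(1+\overline{p}z)=\varphi(\overline{z})(1+\overline{p}\overline{z})$ and using $\varphi(\overline{z})=\sum_n\widehat{\varphi}(-n)z^n$ on $\mathbb{T}$, I would compare the coefficient of $z^k$ on both sides: the left-hand side contributes $\widehat{\varphi}(k)+\overline{p}\,\widehat{\varphi}(k-1)$ and the right-hand side $\widehat{\varphi}(-k)+\overline{p}\,\widehat{\varphi}(-k-1)$, and equating them yields exactly the relation in (ii). Since the identity at $z^0$ is automatic and the comparisons at indices $k$ and $-k$ give the same relation, imposing (ii) only for $k\ge 0$ is enough, which closes the argument.
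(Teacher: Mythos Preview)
Your argument for (ii) $\Leftrightarrow$ (iii) is correct and essentially identical to the paper's: both rewrite (iii) as $\varphi(z)(1+\overline{p}z)=\varphi(\overline{z})(1+\overline{p}\overline{z})$ and compare Fourier coefficients.

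For (i) $\Leftrightarrow$ (ii), however, the paper does \emph{not} compute the matrix entries from scratch. It invokes \cite[Theorem~2.8]{Chung2}, which supplies the closed-form expression
\[
t_{ml}=\frac{1}{\sqrt{2\pi(1-|p|^{2})}}\bigl\{\widehat{\varphi}(m-l)+\overline{p}\,\widehat{\varphi}(m-l-1)\bigr\},
\]
so that symmetry of $[T_\varphi]_{\mathfrak{B}}$ is immediately equivalent to the two-term relation in (ii). Your proposal replaces this citation with a direct evaluation of $\int_{\mathbb{T}}\varphi R_n\overline{R_m}\,ds$. The setup is right --- in particular the observation that $R_n\overline{R_m}=|R_0|^2\,b^{\,n-m}$ with $b(z)=(z-p)/(1-\overline{p}z)$, so the matrix is Toeplitz in $\mathfrak{B}$ --- but the evaluation itself is left undone, and this is exactly where the substance lies. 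Your first route (Laurent expansion of $(z-p)^{k-1}(1-\overline{p}z)^{-k-1}$) will \emph{not} produce a relation among ``finitely many $\widehat{\varphi}(j)$'' directly: for instance $\langle T_{z^2}R_0,R_1\rangle=2p(1-|p|^2)\neq 0$, so each diagonal entry already mixes infinitely many ordinary Fourier coefficients of $\varphi$, and collapsing this to the two-term form requires precisely the identity encoded in Chung's theorem. Your alternative Blaschke substitution $\zeta=b(z)$ is much cleaner --- it turns $t_{nm}$ into the $(m-n)$th Fourier coefficient of $\varphi\circ b^{-1}$, so symmetry becomes $\varphi\circ b^{-1}(\zeta)=\varphi\circ b^{-1}(\overline{\zeta})$ --- but this lands you at a functional equation closer to (iii) than to (ii), and you would still need to carry out the translation.

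In short: the missing ingredient in your plan is exactly Chung's matrix-entry formula, which the paper imports rather than proves; without it (or an executed substitute such as the $\zeta$-change of variables followed by an explicit identification with (iii)), the equivalence (i) $\Leftrightarrow$ (ii) remains only sketched.
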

\begin{proof}
$(i)\Leftrightarrow(ii)$ By \cite[Theorem 2.8]{Chung2}, for non-negative integers $m$ and $l$, the entry $t_{ml}$ of the matrix $[T_{\varphi}]_{\mathfrak{B}}$ is given by
$$
t_{ml}=\frac{1}{\sqrt{2\pi(1-|p|^{2})}}\widehat{\varphi}(m-l)+\frac{\overline{p}}{\sqrt{2\pi(1-|p|^{2})}}\widehat{\varphi}(m-l-1).
$$
Thus $[T_{\varphi}]_{\mathfrak{B}}$ is symmetric if, and only if
$$
\frac{1}{\sqrt{2\pi(1-|p|^{2})}}\left\{\widehat{\varphi}(m-l)+\overline{p}\widehat{\varphi}(m-l-1)\right\}
$$
is equal to
$$
\frac{1}{\sqrt{2\pi(1-|p|^{2})}}\left\{\widehat{\varphi}(l-m)+\overline{p}\widehat{\varphi}(l-m-1)\right\}
$$
which is equivalent to the desired equality just consider $k=m-l$. The equivalence follows from Proposition \ref{prop1}. 

$(ii)\Rightarrow(iii)$ Note that
\begin{equation}\label{eq7}
    \begin{split}
        \sqrt{2\pi}\left\{\varphi(z)-\varphi(\overline{z})\right\} 
        & = \sum_{k=-\infty}^{\infty}\widehat{\varphi}(k)z^{k}-\sum_{k=-\infty}^{\infty}\widehat{\varphi}(k)\overline{z}^{k} \\
        & = \sum_{k=-\infty}^{\infty}[\widehat{\varphi}(k)-\widehat{\varphi}(-k)]z^{k} \\
        & = \sum_{k=-\infty}^{\infty}\overline{p}\widehat{\varphi}(-k-1)z^{k}-\sum_{k=-\infty}^{\infty}\overline{p}\widehat{\varphi}(k-1)z^{k} \\
        & = \sum_{n=-\infty}^{\infty}\overline{p}\widehat{\varphi}(n)z^{-n-1}-\sum_{j=-\infty}^{\infty}\overline{p}\widehat{\varphi}(j)z^{j+1} \\
        & = \sqrt{2\pi}\left\{\overline{p}\varphi(\overline{z})\overline{z}-\overline{p}\varphi(z)z\right\}
    \end{split}
\end{equation}
and so
$$
\varphi(z)+\overline{p}z\varphi(z)=\varphi(\overline{z})+\overline{p}\overline{z}\varphi(\overline{z})
$$
as wished. 

$(iii)\Rightarrow(ii)$ Assuming (iii), we have
$$
\varphi(z)-\varphi(\overline{z})=\varphi(\overline{z})\overline{p}\overline{z}-\varphi(z)\overline{p}z
$$
and therefore, it follows from \eqref{eq7} that
\begin{equation*}
    \begin{split}
        \varphi(z)-\varphi(\overline{z})
        & = \sum_{k=-\infty}^{\infty}[\widehat{\varphi}(k)-\widehat{\varphi}(-k)]\frac{z^{k}}{\sqrt{2\pi}}
    \end{split}
\end{equation*}
and
\begin{equation*}
    \begin{split}
        \varphi(\overline{z})\overline{p}\overline{z}-\varphi(z)\overline{p}z 
        & = \sum_{l=-\infty}^{\infty}\overline{p}\widehat{\varphi}(l)\frac{z^{-l-1}}{\sqrt{2\pi}}-\sum_{j=-\infty}^{\infty}\overline{p}\widehat{\varphi}(j)\frac{z^{j+1}}{\sqrt{2\pi}} \\
        & = \sum_{k=-\infty}^{\infty}\overline{p}\left\{\widehat{\varphi}(-k-1)-\widehat{\varphi}(k-1)\right\}\frac{z^{k}}{\sqrt{2\pi}}.
    \end{split}
\end{equation*}
Thus since $\left\{\frac{1}{\sqrt{2\pi}}z^{n}:n\in\mathbb{Z}\right\}$ is an orthonormal basis for $L^{2}(\mathbb{T})$, we get (ii).
\end{proof}

Unlike the conjugations given in \eqref{eq2}, see \cite[Theorem 3.1]{Ko}, there are no complex symmetric Toeplitz operators $T_\varphi$ with conjugation $\mathfrak{J}$ with finite symbol $\varphi$.

\begin{cor}
Let $p\in\mathbb{D}$ nonzero. If $\varphi(z)=\sum_{n=-M}^{N}\widehat{\varphi}(n)z^{n}$ where $N\geq M>0$ with nonzero $\widehat{\varphi}(-M),\widehat{\varphi}(N)$, then $T_{\varphi}$ is not $\mathfrak{J}$-symmetric.
\end{cor}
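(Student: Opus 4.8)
The plan is to argue by contradiction using the characterization in Theorem~\ref{teo1}, specifically the equivalence $(i)\Leftrightarrow(ii)$. Suppose $T_\varphi$ were $\mathfrak{J}$-symmetric; then the identity in (ii),
\[
\widehat{\varphi}(k)=\widehat{\varphi}(-k)+\overline{p}\left\{\widehat{\varphi}(-k-1)-\widehat{\varphi}(k-1)\right\},
\]
would hold for every non-negative integer $k$. Since $\varphi$ is a Laurent polynomial supported on $\{-M,\ldots,N\}$, its Fourier coefficients satisfy $\widehat{\varphi}(n)=0$ whenever $n>N$ or $n<-M$; the whole argument consists of feeding a single, well-chosen index $k$ into this identity and reading off the surviving terms.

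The key step is to evaluate (ii) at $k=N+1$, which is legitimate because $N\geq M>0$ forces $N+1\geq 2>0$. On the left-hand side $\widehat{\varphi}(N+1)=0$, as $N+1>N$. On the right-hand side I would check that $\widehat{\varphi}(-N-1)$ and $\widehat{\varphi}(-N-2)$ both vanish: here the hypothesis $N\geq M$ is exactly what is needed, since it gives $-N-1<-M$, and \emph{a fortiori} $-N-2<-M$. The only term that does not die is $\widehat{\varphi}(N)$, so the identity collapses to $0=-\overline{p}\,\widehat{\varphi}(N)$, that is, $\overline{p}\,\widehat{\varphi}(N)=0$. Since $p\in\mathbb{D}$ is nonzero and $\widehat{\varphi}(N)\neq0$ by hypothesis, this is a contradiction, and the corollary follows.

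The only delicate point — and hence the main thing to get right — is the support bookkeeping that guarantees the three vanishing coefficients; everything else is immediate once the correct index is chosen. As an alternative one could instead work from the functional equation (iii): writing $\overline{z}=z^{-1}$ on $\mathbb{T}$, it reads $\varphi(z)(1+\overline{p}z)=\varphi(z^{-1})(1+\overline{p}z^{-1})$, an equality of Laurent polynomials whose top degree on the left is $N+1$ (coefficient $\overline{p}\,\widehat{\varphi}(N)\neq0$) and on the right is $M$ (coefficient $\widehat{\varphi}(-M)\neq0$); matching top degrees would force $M=N+1$, contradicting $N\geq M$. Either route gives the result, the second making the use of the extremal coefficients $\widehat{\varphi}(N)$ and $\widehat{\varphi}(-M)$ transparent.
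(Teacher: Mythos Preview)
Your argument is correct and follows the paper's proof essentially verbatim: assume $\mathfrak{J}$-symmetry, plug $k=N+1$ into condition~(ii) of Theorem~\ref{teo1}, use $N\geq M$ to kill $\widehat{\varphi}(N+1)$, $\widehat{\varphi}(-N-1)$, $\widehat{\varphi}(-N-2)$, and read off the contradiction $0=-\overline{p}\,\widehat{\varphi}(N)\neq 0$. Your alternative degree-matching argument via (iii) is also valid and is not in the paper, but it is just a repackaging of the same vanishing-coefficient observation.
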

\begin{proof}
If $T_{\varphi}$ is $\mathfrak{J}$-symmetric, then by Theorem \ref{teo1} (ii) we have
$$
\widehat{\varphi}(N+1)=\widehat{\varphi}(-N-1)+\overline{p}\left\{\widehat{\varphi}(-N-2)-\widehat{\varphi}(N)\right\}.
$$
But since $\widehat{\varphi}(N+1), \ \widehat{\varphi}(-N-1)$ and $\widehat{\varphi}(-N-2)$ are all zero, it follows that
$$
0=\widehat{\varphi}(N+1)=-\overline{p}\widehat{\varphi}(N)\neq0
$$
which is a contradiction.
\end{proof}

Next, we use Theorem \ref{teo1} to obtain examples of $\mathfrak{J}$-symmetric Toeplitz operators.

\begin{ex}\label{ex1}
Let $p\in(-1,1)$ a real number. If $\varphi(z)=\dfrac{1+p\overline{z}}{|1+p\overline{z}|}$, then $T_{\varphi}$ is $\mathfrak{J}$-symmetric. In fact, since
$$
\varphi(\overline{z})\cdot\dfrac{(1+p\overline{z})}{1+pz}=\frac{1+pz}{|1+pz|}\cdot\frac{1+p\overline{z}}{1+pz}=\frac{1+p\overline{z}}{|1+p\overline{z}|}=\varphi(z)
$$
we have by Theorem \ref{teo1} that $T_{\varphi}$ is $\mathfrak{J}$-symmetric.
\end{ex}

\subsection{An example of a $\mathfrak{C}_{p}$-symmetric Toeplitz operator}

Let's consider another family of conjugation not as expected as the conjugation $\mathfrak{J}$. Let $p\in\mathbb{D}$. In \cite{Ferreira}, Ferreira and De Assis Júnior showed that the antilinear operator
\begin{eqnarray}\label{eq4}
\sum_{n=0}^{\infty}a_{n}z^{n}\longmapsto\sqrt{1-|p|^{2}}\sum_{n=0}^{\infty}\overline{a_{n}}\frac{(z-p)^{n}}{(1-\overline{p}z)^{n+1}}
\end{eqnarray}
is a conjugation on $H^{2}(\mathbb{T})$. Of course, since
\begin{eqnarray*}
\left\{\sqrt{\frac{1-|p|^{2}}{2\pi}}\frac{(p-z)^{n}}{(1-\overline{p}z)^{n+1}}:n=0,1,2,\ldots\right\}
\end{eqnarray*}
is also an orthonormal basis on $H^{2}(\mathbb{T})$, we have for the same reason as \eqref{eq4} that
\begin{eqnarray}\label{eq6}
\mathfrak{C}_{p}\left(\sum_{n=0}^{\infty}a_{n}z^{n}\right)=\sqrt{1-|p|^{2}}\sum_{n=0}^{\infty}\overline{a_{n}}\frac{(p-z)^{n}}{(1-\overline{p}z)^{n+1}}
\end{eqnarray}
is a conjugation on $H^{2}(\mathbb{T})$.

Now, let $\phi$ be an analytic map from $\mathbb{D}$ into itself and $\psi$ be an analytic map on $\mathbb{D}$. Remember that the \textit{weighted composition operator} $C_{\psi,\phi}$ is defined by
$$
C_{\psi,\phi}h(z)=\psi(z)h(\phi(z))
$$
where $h$ is an analytic map on $\mathbb{D}$. Considering $\lambda=1$ in \eqref{eq2}, $p\in(-1,1)$ a real number and 
$$
\psi(z)=\frac{\sqrt{1-|p|^{2}}}{1-pz}
$$
and
$$
\phi(z)=\frac{p-z}{1-pz}
$$
Fatehi \cite{Fatehi} showed that $C_{\psi,\phi}C_{1}$ is a conjugation over $H^{2}(\mathbb{T})$. Since, for each $f\in H^{2}(\mathbb{T})$, we have
\begin{equation*}
    \begin{split}
        C_{\psi,\phi}C_{1}f(z) & = C_{\psi,\phi}C_{1}\left(\sum_{n=0}^{\infty}a_{n}z^{n}\right) \\
        & = C_{\psi,\phi}\left(\sum_{n=0}^{\infty}\overline{a_{n}}z^{n}\right) \\
        & = \frac{\sqrt{1-|p|^{2}}}{1-pz}\left(\sum_{n=0}^{\infty}\overline{a_{n}}\left[\frac{p-z}{1-pz}\right]^{n}\right) \\
        & = \sqrt{1-|p|^{2}}\sum_{n=0}^{\infty}\overline{a_{n}}\frac{(p-z)^{n}}{(1-pz)^{n+1}} \\
        & = \mathfrak{C}_{p}f(z),
    \end{split}
\end{equation*}
and so we have the following:
\begin{ex}(\cite[Example 2.4]{Fatehi})\label{ex2}
Let $p\in(-1,1)$ a real number. If $\varphi(z)=\frac{1-pz}{|1-pz|}$, then $T_{\varphi}$ is $\mathfrak{C}_{p}$-symmetric by  
\end{ex}

Note that Examples \ref{ex1} and \ref{ex2} are very similar, although $T_{\frac{1-pz}{|1-pz|}}$ is not $\mathfrak{J}$-symmetric by Theorem \ref{teo1}. In this sense, the question is natural:

\begin{question}
Let $p\in\mathbb{D}$ and $\varphi\in L^{\infty}(\mathbb{T})$. Under what conditions is $T_{\varphi}$ complex symmetric with the conjugation $\mathfrak{C}_{p}$?
\end{question}



\begin{thebibliography}{1}

\bibitem{Bender}
        {\sc C. Bender, A. Fring, U. Günther and H. Jones},
        {\it Quantum physics with non-Hermitian operators},
        J. Phys. A: Math. Theor. \textbf{45} (2012) 440301.
        
\bibitem{Bu}
        {\sc Q. Bu, Y. Chen and S. Zhu},
        {\it Complex symmetric Toeplitz operators},
        Integr. Equ. Oper. Theory \textbf{93}, no. 15, (2021) 1-19.
        
\bibitem{Arup}
        {\sc A. Chattopadhyay, S. Das, C. Pradhan and S. Sarkar},
        {\it Characterization of C-symmetric Toeplitz operators for a class of conjugations in Hardy spaces},
        Linear and Multilinear Algebra (2022) 1-23.
        
\bibitem{Chen}
        {\sc Y. Chen, Y. J. Lee and Y. Zhao},
        {\it Complex symmetry of Toeplitz operators},
        Banach J. Math. Anal. \textbf{16}, no.15, (2022) 1-23.        

\bibitem{Chung}
        {\sc Y.-B. Chung}, 
        {\it Classification of Toeplitz operators on Hardy spaces of bounded domains in the plane}, 
        Math. Notes \textbf{101}, no. 3, (2017) 529-541.

\bibitem{Chung2}
        {\sc Y.-B. Chung}, 
        {\it Special orthonormal basis for $L^{2}$ functions on the unit circle}, 
        Bull. Korean Math. Soc. \textbf{54} (2017) 2013-2027.

\bibitem{Fatehi}
        {\sc M. Fatehi}, 
        {\it Complex symmetric weighted composition operators}, 
        Complex Variables and Elliptic Equations \textbf{64} (2019) 710-720.

\bibitem{Ferreira}
        {\sc M. S. Ferreira and G. De Assis Júnior}, 
        {\it Special conjugations on the Hardy space}, 
        International Journal of Functional Analysis, Operator Theory and Applications \textbf{14} (2022) 19-24.

\bibitem{Garcia}
        {\sc S. R. Garcia and M. Putinar}, 
        {\it Complex symmetric operators and applications}, 
        Trans. Amer. Math. Soc. \textbf{358} (2006) 1285-1315.

\bibitem{Garcia2}
        {\sc S. R. Garcia and M. Putinar}, 
        {\it Complex symmetric operators and applications II}, 
        Trans. Amer. Math. Soc. \textbf{359} (2007) 3913-3931.

\bibitem{Garcia3}
        {\sc S. R. Garcia and W. R. Wogen}, 
        {\it Complex symmetric partial isometries}, 
        J. Funct. Anal. \textbf{257} (2009) 1251–1260.

\bibitem{Garcia4}
        {\sc S. R. Garcia and W. R. Wogen}, 
        {\it Some new classes of complex symmetric operators}, 
        Trans. Amer. Math. Soc. \textbf{362} (2010) 6065–6077.

\bibitem{Garcia5}
        {\sc S. R. Garcia, E. Prodan and M. Putinar},
        {\it Mathematical and physical aspects of complex symmetric operators},
        J. Phys. A \textbf{47} (2014) 1–51.
    
\bibitem{Guo}
        {\sc K. Guo and S. Zhu}, 
        {\it A canonical decomposition of complex symmetric operators}, 
        J. Operator Theory, \textbf{72} (2014) 529-547.
        
\bibitem{Ko}
        {\sc E. Ko and J. E. Lee}, 
        {\it On complex symmetric Toeplitz operators}, 
        J. Math. Anal. Appl. \textbf{434} (2016) 20-34.
        
\bibitem{Li}
        {\sc R. Li, Y. Yan and Y. Lu}, 
        {\it A class of complex symmetric Toeplitz operators on Hardy and Bergman spaces}, 
        J. Math. Anal. \textbf{489} (2020) 1-12.
        
\end{thebibliography}
\end{document}